\documentclass [11pt]{amsart} 
\usepackage{a4}
\usepackage{paralist}
\usepackage{graphicx}
\usepackage{amssymb}
\usepackage{float}
\usepackage{amsmath}
\usepackage{psfrag}

\setcounter{secnumdepth}{5}     %
\setcounter{tocdepth}{5}        %
\hbadness 10000                 

\setlength{\parindent}{15pt}        %

\usepackage{amsmath,amsthm}

\theoremstyle{plain}
\newtheorem{thm}{Theorem}

\newtheorem{prop}[thm]{Proposition}
\newtheorem{cor}[thm]{Corollary}

\theoremstyle{definition}
\newtheorem{defn}[thm]{Definition}


\theoremstyle{remark}

\newtheorem*{histrem}{A Historical Remark}

\DeclareMathOperator{\core}{core}
\DeclareMathOperator{\comp}{\textbf{K}}

\title{Coset intersection graphs for groups}
\author{Jack Button, Maurice Chiodo, Mariano Zeron-Medina Laris}


\begin{document}

\begin{abstract}
Let $H, K$ be subgroups of $G$. We investigate the intersection properties of left and right cosets of these subgroups.
\end{abstract}

\maketitle

If $H$ and $K$ are subgroups of $G$, then $G$ can be partitioned as the disjoint union of all left cosets of $H$, as well as the disjoint union of all right cosets of $K$. But how do these two partitions of $G$ intersect each other?

\begin{defn}
Let $G$ be a group, and $H$ a subgroup of $G$. A \textit{left transversal} for $H$ in $G$ is a set $\{t_{\alpha}\}_{\alpha \in I}\subseteq G$ such that for each left coset $gH$ there is precisely one $\alpha \in I$ satisfying $t_{\alpha}H = gH$. A \textit{right transversal} for $H$ in $G$ in defined in an analogous fashion. A \emph{left-right} transversal for $H$ is a set $S$ which is simultaneously a left transversal, and a right transversal, for $H$ in $G$.
\end{defn}

A useful tool for studying the way left and right cosets interact, and obtaining transversals, is the coset intersection graph which we introduce here.

\begin{defn}
Let $G$ be a group and $H, K$ subgroups of $G$. We define the \emph{coset intersection graph} $\Gamma^{G}_{H,K}$ to be a graph with vertex set consisting of all left cosets of $H$ $(\{l_{i}H\}_{i \in I})$ together with all right cosets of $K$ $(\{Kr_{j}\}_{j \in J})$, where $I$, $J$ are index sets. If a left coset of $H$ and right coset of $K$ correspond, they are still included twice. Edges (undirected) are included whenever any two of these cosets intersect, and the edge $aH - Kb$ corresponds to the set $aH \cap Kb$.
\end{defn}

Observing that left (respectively, right) cosets do not intersect, we see that $\Gamma^{G}_{H,K}$ is a bipartite graph, split between $\{l_{i}H\}_{i \in I}$ and $\{Kr_{j}\}_{j \in J}$.

For $H$ a finite index subgroup of $G$, the existence of a left-right transversal is well known, sometimes presented as an application of Hall's marriage theorem \cite{Hall}. When $G$ is finite $H$ will have size $n$, so any set of $k$ left cosets of $H$ intersects at least $k$ right cosets of $H$ (or their union would have size $<kn$). Hence by Hall's theorem there is a matching on the bipartite graph $\Gamma^{G}_{H,H}$, and thus a left-right transversal (take one element from each edge in this matching). When $G$ is infinite the same argument applies to the finite quotient $G/\core(H)$ (The core of $H$, $\core(H)$, is the intersection of all conjugates of $H$ in $G$, $\bigcap_{g \in G} g^{-1}Hg$; it is always normal, and will be of finite index in $G$ whenever $H$ is).

The purpose of this paper is to show that in fact a much stronger result is true: we can completely describe the way that left and right cosets of $H$ intersect, without any need for Hall's theorem, but instead by studying and applying the properties of the coset intersection graph. We begin this now.

\begin{thm}\label{complete bipartite}
$\Gamma^{G}_{H,K}$ is always a disjoint union of complete bipartite graphs.
\end{thm}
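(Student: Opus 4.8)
The plan is to identify the connected components of $\Gamma^G_{H,K}$ with the $(K,H)$-double cosets of $G$, equipped with their left- and right-coset structure. Recall that $G$ decomposes as a disjoint union $G = \bigsqcup_{g} KgH$ of double cosets. Each double coset $D = KgH$ is simultaneously a union of left cosets of $H$ (namely all $xH$ with $x \in D$, since then $xH \subseteq KgH$) and a union of right cosets of $K$ (all $Kx$ with $x \in D$). I will establish that (i) for any $D$, every left coset contained in $D$ meets every right coset contained in $D$, and (ii) a left coset and a right coset lying in distinct double cosets are disjoint. Granting these, the induced subgraph of $\Gamma^G_{H,K}$ spanned by the cosets inside a single $D$ is a complete bipartite graph with no edges leaving it; since each such subgraph has both parts nonempty (for instance $gH$ and $Kg$ both lie in $KgH$) it is connected, hence is a connected component. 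As $D$ ranges over all double cosets these components exhaust the vertex set, and the theorem follows.

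For (i), note that any left coset contained in $KgH$ has the form $xH = (k_1 g h_1)H = k_1 g H$ for some $k_1 \in K$, $h_1 \in H$, and any right coset contained in $KgH$ has the form $K(k_2 g h_2) = K g h_2$ for some $k_2 \in K$, $h_2 \in H$. The single element $k_1 g h_2$ then lies in $k_1 g H$ (as $h_2 \in H$) and in $K g h_2$ (as $k_1 \in K$), so these two cosets intersect, and the corresponding edge $k_1 g H - K g h_2$ carries at least this element. For (ii), if $xH$ and $Ky$ meet in an element $z$, then $z \in xH \subseteq KxH$ and $z \in Ky \subseteq KyH$; since double cosets partition $G$, we get $KxH = KyH$, so $xH$ and $Ky$ lie in a common double coset. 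This is exactly the contrapositive of (ii).

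Combining (i) and (ii), each double coset $KgH$ contributes precisely one connected component of $\Gamma^G_{H,K}$, namely the complete bipartite graph between the set of left cosets of $H$ inside $KgH$ and the set of right cosets of $K$ inside $KgH$, and distinct double cosets contribute vertex-disjoint components. The mild bookkeeping point that a left coset which happens to equal a right coset as a subset of $G$ is recorded as two separate vertices is harmless: those two vertices lie in the same double coset and are joined by an edge, in accordance with the description above. I do not anticipate a genuine obstacle; the only thing requiring care is the translation ``a left coset meets a right coset'' $\iff$ ``the two cosets lie in a common $(K,H)$-double coset,'' which is precisely the content of (i) and (ii).
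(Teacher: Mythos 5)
Your proof is correct, but it takes a genuinely different route from the paper. You organise the argument around the double coset decomposition $G=\bigsqcup_g KgH$: every left coset of $H$ and every right coset of $K$ sits inside exactly one double coset, cosets in the same double coset always meet (your explicit witness $k_1gh_2 \in k_1gH \cap Kgh_2$), and cosets in different double cosets cannot meet. This is essentially Ore's classical viewpoint, which the paper deliberately sidesteps: its own proof never mentions double cosets, and instead argues purely inside the graph, showing that any path $aH-Kb-cH-Kd$ can be shortcut to an edge $aH-Kd$ (by rearranging the three coset equations) and then using minimality of a shortest path to conclude that every left-coset vertex is adjacent to every right-coset vertex in its component. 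The paper only observes afterwards, in the remark citing Ore, that the components correspond to double cosets. What your approach buys is an immediate, explicit parametrisation of the components by double cosets, which makes the later ``chessboard'' picture and the counting in the proposition very transparent; what the paper's approach buys is a self-contained graph-theoretic argument that exhibits the complete-bipartite structure directly from the incidence relation, which is the expository point the authors emphasise. Your bookkeeping remark about a left coset coinciding with a right coset as a set is handled correctly: the two vertices are distinct by convention, lie in one double coset, and are joined by an edge.
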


\newpage

\begin{proof}
We first show that for $a,b,c,d \in G$ if 
$aH-Kb-cH-Kd$ is a path in $\Gamma^{G}_{H,K}$ then there is an edge $aH-Kd$. Note that there exist $h_{1}, h_{2}, h_{3} \in H$ and $k_{1}, k_{2}, k_{3} \in K$ such that $ah_{1}=k_{1}b$, $k_{2}b=ch_{2}$, $ch_{3}=k_{3}d$. Re-arranging gives $c=k_{3}dh_{3}^{-1}$, so $b=k_{2}^{-1}k_{3}dh_{3}^{-1}h_{2}$, so $a=k_{1}k_{2}^{-1}k_{3}dh_{3}^{-1}h_{2}h_{1}^{-1}$ and thus $ah_{1}h_{2}^{-1}h_{3}=k_{1}k_{2}^{-1}k_{3}d$. Hence $aH-Kd$ as required.
\\Now take any $l_{i}H$, and some $Kr_{j}$ in the connected component of $l_{i}H$ in $\Gamma^{G}_{H,K}$ (there is at least one such $Kr_{j}$); we show $l_{i}H$ and $Kr_{j}$ are connected by an edge. For if not, then there must be at least one finite path connecting them; take a minimal such path $\gamma$ from $l_{i}H$ to $Kr_{j}$. Then $\gamma$ begins with $l_{i}H-Ka-bH-Kc-\ldots$, where $Ka \neq Kr_{j}$. But by the previous remark, $l_{i}H$ and $Kc$ must be joined by an edge, contradicting the minimality of $\gamma$. So $l_{i}H$ and $Kr_{j}$ are joined by an edge, for every $Kr_{j}$ in the connected component of $l_{i}H$.
\end{proof}

Recall that $\comp_{s,t}$ denotes the complete bipartite graph on $(s,t)$ vertices. 
By imposing finiteness conditions on subgroups (finite index, or finite size), the graph $\Gamma^{G}_{H,K}$ exhibits an even greater level of symmetry. 

\begin{thm}\label{graph}
Let $H,K <G$. Suppose that either $|H|=m$, $|K|=n$ (where both subgroups are finite), or $|G:H|=n$,  $|G:K|=m$ (where both subgroups have finite index). Then the graph $\Gamma^{G}_{H,K}$ is a collection of disjoint, finite, complete bipartite graphs, where each component is of the form $\comp_{s_{i}, t_{i}}$ with $s_{i}/t_{i} = n/m$.
\end{thm}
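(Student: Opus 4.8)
The plan is to combine Theorem~\ref{complete bipartite} with a two-way count of a certain union of cosets, and to reduce the finite-index case to the finite case by passing to a quotient. By Theorem~\ref{complete bipartite} every connected component of $\Gamma^{G}_{H,K}$ is complete bipartite; fix one such component $C$, with left-coset vertices $a_{1}H,\dots$ (cosets of $H$) on one side and right-coset vertices $Kb_{1},\dots$ (cosets of $K$) on the other, so every $a_{i}H$ meets every $Kb_{j}$. Say $C$ has $s$ left-coset vertices and $t$ right-coset vertices (a priori possibly infinitely many). Since distinct components are disjoint, it remains only to show that $C$ is finite and that $s/t=n/m$, i.e.\ $C\cong\comp_{s,t}$ with $s/t=n/m$, where throughout $s$ counts left cosets of $H$ and $t$ counts right cosets of $K$.

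Suppose first that $|H|=m$ and $|K|=n$. The sets $a_{1}H\cap Kb_{j}$, as $j$ ranges over the right side of $C$, are pairwise disjoint (distinct right cosets of $K$ are disjoint), each nonempty (each $a_{1}H-Kb_{j}$ is an edge, and edges correspond to nonempty intersections), and all contained in $a_{1}H$; since $|a_{1}H|=m$, there are at most $m$ of them, so $t\le m$. Symmetrically $s\le n$, so $C$ is finite. Now let $U$ be the union of the left cosets $a_{1}H,\dots,a_{s}H$ occurring in $C$. Any $x\in U$ lies in some $a_{i}H$ and in some right coset $Kb$ of $K$; then $a_{i}H\cap Kb\ne\emptyset$, so $Kb$ is joined to $a_{i}H$ and hence is a vertex of $C$. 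Thus $U$ is also the disjoint union of $Kb_{1},\dots,Kb_{t}$. Counting $|U|$ via each decomposition gives $sm=|U|=tn$, i.e.\ $s/t=n/m$.

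Now suppose $|G:H|=n$ and $|G:K|=m$. Put $N=\core(H)\cap\core(K)$: it is normal in $G$, is contained in $H\cap K$, and has finite index in $G$ (the core of a finite-index subgroup has finite index, and an intersection of two finite-index subgroups has finite index). As in the previous paragraph, the union $U$ of the left cosets occurring in $C$ equals the union of the right cosets occurring in $C$. Because $N$ is normal and $N\le H$, each $a_{i}H$ is a disjoint union of $[H:N]$ left cosets of $N$, and these cosets are distinct across different $i$ (the $a_{i}H$ being disjoint), so $U$ is a disjoint union of exactly $s\,[H:N]$ cosets of $N$. Likewise, using $N\le K$ and normality (so $Kb_{j}$ is a disjoint union of $[K:N]$ left cosets of $N$), $U$ is a disjoint union of $t\,[K:N]$ cosets of $N$. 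Since $[G:N]$ is finite, $U$ is a union of finitely many cosets of $N$, so $s$ and $t$ are finite and $C$ is finite; equating the two counts gives $s\,[H:N]=t\,[K:N]$. Substituting $[H:N]=[G:N]/n$ and $[K:N]=[G:N]/m$ (tower law) and cancelling $[G:N]$ yields $s/t=n/m$, as required.

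The new content beyond Theorem~\ref{complete bipartite} is finiteness of the components and the ratio $n/m$. The finite-subgroup case is essentially just the two-way count of $|U|$; the only delicate point is the bookkeeping in the last paragraph, namely that $U$ decomposes into whole cosets of $N$ counted both through the $a_{i}H$ and through the $Kb_{j}$ — this uses that $N$ is normal and that $N\le H$ and $N\le K$. I expect setting up this reduction cleanly to be the main (though modest) obstacle.
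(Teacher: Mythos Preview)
Your proof is correct and follows essentially the same approach as the paper: in the finite-subgroup case you count the union $U$ two ways exactly as the paper does, and in the finite-index case you use a normal finite-index subgroup contained in $H\cap K$ just as the paper does. The only minor variation is in Case~2: the paper passes to the finite quotient $G/\core(H\cap K)$ and invokes Case~1 there (implicitly using that $\Gamma^{G}_{H,K}\cong\Gamma^{G/N}_{H/N,K/N}$), whereas you stay in $G$ and count $N$-cosets of $U$ directly via the $a_iH$ and the $Kb_j$; this is the same computation unwound, and your version has the small advantage of not relying on that unspoken graph isomorphism.
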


\begin{proof}
Case 1: $|H|=m$, $|K|=n$. 
Take a connected component of $\Gamma^{G}_{H,K}$, which from theorem \ref{complete bipartite} must look like $\comp_{s,t}$ (as $|H|, |K|$ are finite) with vertices given by $s$ left cosets of $H$ and $t$ right cosets of $K$. Thus, in $G$, the disjoint union of these $s$ left cosets must be set-wise equal to the disjoint union of these $t$ right cosets. So $s|H|=t|K|$, and hence $s/t = n/m$.
\\Case 2: $|G:H|=n$,  $|G:K|=m$. 
Take $\core(H \cap K)$, which must be finite index in $G$ (say $|G: \core(H \cap K)|=l$), as $H, K$ and hence $H\cap K$ are. Now form the quotient $G/\core(H \cap K)$. Set $H':=H/\core(H \cap K)$, $K':= K/ \core(H \cap K)$. Since $|G:\core(H \cap K)|= |G: H|\cdot |H:\core(H \cap K)|$, we have that $|H'|=l/n$. Similarly, $|K'|=l/m$. Now apply case 1 to $G/\core(H \cap K)$, $H'$, $K'$.
\end{proof}

Under the hypotheses of the above theorem, we see that sets of $s_{i}$ left cosets of $H$ completely intersect  sets of $t_{i}$ right cosets of $K$, with $s_{i}/t_{i}$ constant over $i$. By drawing left cosets of $H$ as columns, and right cosets of $K$ as rows, we partition $G$ into irregular `chessboards' (denoted $C_{i}$) each with edge ratio $n:m$. Each chessboard $C_{i}$ corresponds to the connected component $\comp_{s_{i}, t_{i}}$ of $\Gamma^{G}_{H,K}$, and individual tiles in $C_{i}$ correspond to the nonempty intersection of a left coset of $H$ and a right coset of $K$ (i.e., edges in $\comp_{s_{i}, t_{i}}$). By choosing one element from each tile on a leading diagonal of the $C_{i}$'s (equivalently, one element from each edge in a maximum matching of the $\comp_{s_{i}, t_{i}}$'s), we deduce a stronger version of Hall's theorem for transversals:

\begin{cor}\label{useful}
Let $H,K<G$ be of finite index, with $|G:H|=m$ and $|G:K|=n$, where $m \leq n$. Then there exists a set $T \subseteq G$ which is a left transversal for $H$ in $G$, and which can be extended to a right transversal for $K$ in $G$. If $H=K$ in $G$, then $T$ becomes a left-right transversal for $H$.
\end{cor}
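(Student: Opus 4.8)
The plan is to build $T$ directly out of a matching in $\Gamma^{G}_{H,K}$, letting Theorem~\ref{graph} do all the work so that Hall's theorem is never needed. First I would invoke Theorem~\ref{graph}, applied with $|G:H|$ in the role of its ``$n$'' and $|G:K|$ in the role of its ``$m$'': this gives a decomposition of $\Gamma^{G}_{H,K}$ into finitely many finite complete bipartite graphs $\comp_{s_i,t_i}$, where in the $i$-th component $s_i$ is the number of left cosets of $H$ and $t_i$ the number of right cosets of $K$, and $s_i/t_i = |G:H|/|G:K| = m/n$. Since $m \leq n$ this forces $s_i \leq t_i$ for every $i$.

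Next, in each $\comp_{s_i,t_i}$ I would choose a matching saturating the left-coset side. Such a matching exists for the cheapest possible reason: the graph is complete bipartite and $s_i \leq t_i$, so any injection from the $s_i$ left-coset vertices to the $t_i$ right-coset vertices determines one, with no appeal to the marriage theorem. For each selected edge $aH - Kb$ pick an element of the (nonempty, since the edge is present) intersection $aH \cap Kb$; there are only finitely many cosets in all, so only finitely many choices are made. Let $T$ be the resulting finite set.

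Then I would verify: (i) $T$ is a left transversal for $H$, because each left coset of $H$ is a left-coset vertex of exactly one component, the saturating matching there selects exactly one edge at it, and elements chosen from distinct left cosets are distinct; (ii) the elements of $T$ lie in pairwise distinct right cosets of $K$ --- within a component since a matching covers each right-coset vertex at most once, across components since disjoint components involve disjoint families of cosets --- so $|T| = m$, and adjoining one element from each of the $n-m$ remaining right cosets of $K$ extends $T$ to a right transversal for $K$; and (iii) if $H=K$ then $m=n$, so $s_i = t_i$ and the left-saturating matchings are perfect, whence by (i) and (ii) $T$ meets every left coset and every right coset of $H$ exactly once, i.e.\ $T$ is a left--right transversal.

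I do not expect a genuine obstacle: the real content is entirely carried by Theorem~\ref{graph}, which upgrades Hall's argument to the triviality that complete bipartite graphs have left-saturating (and, in the balanced case, perfect) matchings. The only things to be careful about are matching the hypothesis $m \leq n$ against the ratio $s_i/t_i$ coming out of Theorem~\ref{graph} in the correct orientation, and noting that finiteness of index guarantees finitely many cosets, so that the selection of coset representatives is unproblematic and requires no choice principle.
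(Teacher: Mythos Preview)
Your proposal is correct and follows essentially the same approach as the paper: invoke Theorem~\ref{graph} to decompose $\Gamma^{G}_{H,K}$ into finite complete bipartite pieces $\comp_{s_i,t_i}$ with $s_i\le t_i$, take a (trivially existing) matching saturating the left-coset side in each component, and pick one representative from each matched edge-intersection. The paper phrases this as choosing one element from each tile on a ``leading diagonal'' of each chessboard $C_i$ (equivalently, from each edge of a maximum matching of the $\comp_{s_i,t_i}$'s), but the content is identical, and your explicit verification of points (i)--(iii) simply unpacks what the paper leaves implicit.
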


We now compute the sizes of the complete bipartite components of $\Gamma^{G}_{H,K}$.

\begin{prop}
Let $H,K<G$ and $g \in G$. Then the number of right cosets of $K$ intersecting $gH$ (call this $M_{g}$) satisfies:
\\$1$. $M_{g}=\frac {|G:gHg^{-1} \cap K|}{|G:H|}$ 
if $|G:H|, |G:K|$ are both finite.
\\$2$. $M_{g}=\frac{|H|}{|gHg^{-1} \cap K|}$ if $|H|,|K|$ are both finite.
\\A symmetric result applies for the number of left cosets of $H$ intersecting $Kg$.
\end{prop}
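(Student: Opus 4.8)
The plan is to reduce $M_g$ to the number of right cosets of $K$ that partition the double coset $KgH$, and then evaluate that number by a standard index computation. The key observation is that a right coset $Kx$ meets $gH$ if and only if $x\in KgH$: if $kx=gh$ with $k\in K$, $h\in H$ then $x=k^{-1}gh\in KgH$, and conversely if $x=kgh$ then $gh=k^{-1}x\in Kx\cap gH$. Since $x\in KgH$ is the same as $Kx\subseteq KgH$, the right cosets of $K$ meeting $gH$ are precisely those occurring in the decomposition of $KgH$ into right $K$-cosets, so $M_g$ is their number. (Equivalently, by Theorem~\ref{complete bipartite} the component of $gH$ in $\Gamma^{G}_{H,K}$ is some $\comp_{s,t}$ and $M_g=t$; but the double-coset route is self-contained.)

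Next I would count the right $K$-cosets in $KgH$. Writing $KgH=\bigcup_{h\in H}Kgh$, we have $Kgh_{1}=Kgh_{2}$ exactly when $gh_{1}h_{2}^{-1}g^{-1}\in K$, i.e. when $h_{1}h_{2}^{-1}\in H\cap g^{-1}Kg$; hence the distinct cosets $Kgh$ are in bijection with the cosets of $H\cap g^{-1}Kg$ in $H$, so $M_g=|H:H\cap g^{-1}Kg|$. Conjugation by $g$ is an automorphism of $G$ carrying $H$ to $gHg^{-1}$ and $H\cap g^{-1}Kg$ to $gHg^{-1}\cap K$, and preserves index, so $M_g=|gHg^{-1}:gHg^{-1}\cap K|$.

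Finally I would specialise to the two cases. If $|H|,|K|$ are finite then $gHg^{-1}$ and $gHg^{-1}\cap K$ are finite, and $|gHg^{-1}:gHg^{-1}\cap K|=|gHg^{-1}|/|gHg^{-1}\cap K|=|H|/|gHg^{-1}\cap K|$, which is case~2. If instead $|G:H|,|G:K|$ are finite, then $gHg^{-1}$ and $K$, hence $gHg^{-1}\cap K$, have finite index in $G$, and multiplicativity of the index together with $|G:gHg^{-1}|=|G:H|$ gives $|gHg^{-1}:gHg^{-1}\cap K|=|G:gHg^{-1}\cap K|/|G:H|$, which is case~1. The symmetric statement follows by running the same argument on the decomposition $KgH=\bigcup_{k\in K}kgH$ into left $H$-cosets: the number of left cosets of $H$ meeting $Kg$ is $|K:K\cap gHg^{-1}|$, which equals $|K|/|gHg^{-1}\cap K|$ when $|H|,|K|$ are finite and $|G:gHg^{-1}\cap K|/|G:K|$ when $|G:H|,|G:K|$ are finite. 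There is no real obstacle here: the only points needing care are the left/right bookkeeping in the double-coset partition and checking that the finiteness hypothesis invoked in each case really does force the index or order in the denominator to be finite; the combinatorial content is entirely the standard count of one-sided cosets inside a double coset.
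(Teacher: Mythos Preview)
Your argument is correct. You pass through the double coset $KgH$, observe that the right $K$-cosets meeting $gH$ are exactly those contained in $KgH$, and then count them by the standard bijection with cosets of $H\cap g^{-1}Kg$ in $H$, arriving at $M_g=|gHg^{-1}:gHg^{-1}\cap K|$; the two specialisations are then routine.

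The paper takes a different route. It sets $N:=\core(H\cap K)$ and shows directly that every nonempty tile $gH\cap Ka$ contains the same number of $N$-cosets as $gHg^{-1}\cap K$ does, namely $|gHg^{-1}\cap K:N|$; since $gH$ itself contains $|H:N|$ cosets of $N$, division gives $M_g=|H:N|/|gHg^{-1}\cap K:N|$, from which both cases follow. This argument never names the double coset $KgH$ and stays within the ``chessboard'' picture developed earlier in the paper, measuring everything against the normal subgroup $N$. Your approach is the classical double-coset count (essentially Ore's viewpoint, which the paper acknowledges at the end) and is arguably more self-contained, needing no appeal to the core or to normality; the paper's approach, on the other hand, makes visible that all tiles in a given component have equal ``$N$-size'', which ties back to the uniformity in Theorem~\ref{graph}. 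Both yield the same intermediate formula $M_g=|gHg^{-1}:gHg^{-1}\cap K|$ after simplification.
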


\begin{proof}
Let $N:= \core(H \cap K)$. We show that if $gH \cap Ka \neq \emptyset$ for some $a \in G$, then the number of cosets of $N$ in $gH \cap Ka $ is the same as the number in $gHg^{-1} \cap K $, independent of $a$ (in each of case 1 or 2 this number will be finite). So, as $gH \cap Ka \neq \emptyset$, we must have $gh=ka$ for some $h \in H$, $k \in K$. As $N$ is normal, we have the the number of cosets of $N$ in $gH \cap Ka $ is the same as the number in $gHa^{-1} \cap K = gHh^{-1}g^{-1}k \cap K=gHg^{-1}k \cap K$, which is the same as the number in $gHg^{-1} \cap Kk^{-1}=gHg^{-1} \cap K$ (observe that this number will be $|gHg^{-1} \cap K : N|$). 
This immediately gives:
\\(number of cosets of $N$ in $gH$) = (number of cosets of $N$ in $gHg^{-1} \cap K $)$\cdot M_{g}$ 
\\Thus $M_{g}=\frac {|H:N|}{|gHg^{-1} \cap K:N|}$. Both cases of the proposition now follow.
%
\end{proof}

All of our results can be derived from the work of Ore \cite{Ore}, who 
makes use of \emph{double cosets}; partitions of $G$ into sets of the form $KgH$ (where $H, K<G$). It follows that the complete bipartite components of $\Gamma^{G}_{H,K}$ from theorem \ref{complete bipartite} (the `chessboards') correspond to the double cosets of $G$; a left coset $aH$ and a right coset $Kb$ intersect if and only if they lie in the same double coset $KxH$. The symmetry exhibited by the coset intersection graph is not immediately obvious from Ore's use of terminology, and our exposition is more direct.

\begin{histrem}[with contributions from Warren Dicks and Jack Schmidt] The results in this paper have a somewhat piecemeal historical origin. A weaker version of corollary \ref{useful}, that a subgroup of a finite group always has a left-right transversal, appeared in 1910 by Miller \cite{Miller1}. In 1913 Chapman \cite{Chap1} proved the same result; he then realised the existence of the proof by Miller and in 1914 issued a corrigendum \cite{Chap2}. In 1927 Scorza \cite{Scorza} proved corollary \ref{useful} for two separate subgroups $H,K$ but still taking $G$ to be finite (the first time such a proof used double cosets). By the time of Zassenhaus' text \cite{Zas} in 1937, corollary \ref{useful} was known for finite index subgroups of infinite groups (the first time such a proof used Hall's theorem). In 1941 Sh\"{u} \cite{Shu} addressed this problem, in a way that leaves us somewhat confused. In 1958 Ore \cite{Ore} expanded significantly on such ideas, and gives what is to-date the most complete treatment of these, as well as his own historical account.
\end{histrem}

\

\begin{tt}
\noindent Selwyn College, University of Cambridge
\\Grange Road, Cambridge, CB3 9DQ, UK
\\J.O.Button@dpmms.cam.ac.uk
\\
\\Dipartimento di Matematica `Federigo Enriques'
\\Universit\`{a} degli Studi di Milano
\\Via Cesare Saldini 50, Milano, 20133, ITALIA
\\maurice.chiodo@unimi.it
\\
\\Department of Pure Mathematics and Mathematical Statistics
\\Centre for Mathematical Sciences, University of Cambridge
\\Wilberforce Road, Cambridge, CB3 0WB, UK
\\marianozeron@gmail.com

\end{tt}

\end{document}